\documentclass[10 pt,a4paper,twoside,reqno]{amsart}
\usepackage{amsfonts,amssymb,amscd,amsmath,enumerate,verbatim,calc}

\setlength{\oddsidemargin}{1.5 cm}
\setlength{\evensidemargin}{1.5cm}
\setlength{\topmargin}{0.5 cm}
\textwidth = 12 cm
\textheight = 19.5 cm 
\flushbottom

\usepackage{bbm}
\usepackage{graphicx}
\usepackage{varwidth}
\usepackage{hyperref}
\hypersetup{
    colorlinks=true, 
    linktoc=all,     
    linkcolor=blue,
        citecolor=red,
    filecolor=black,
    urlcolor=blue	
}

\usepackage{fancyhdr}
\pagestyle{fancy}
\fancyfoot{}
\fancyhead[RO, RE]{\thepage}

\usepackage[inline]{enumitem}
\makeatletter
\newcommand{\inlineitem}[1][]{%
\ifnum\enit@type=\tw@
    {\descriptionlabel{#1}}
  \hspace{\labelsep}%
\else
  \ifnum\enit@type=\z@
       \refstepcounter{\@listctr}\fi
    \quad\@itemlabel\hspace{\labelsep}%
\fi} \makeatother
\parindent=0pt
\newcommand{\ga}{\alpha}

\newcommand{\gth}{\theta}

\newcommand{\gl}{\lambda}

\newcommand{\gp}{\pi}

\newcommand{\gs}{\sigma}

\newcommand{\gt}{\tau}

\newcommand{\gf}{\phi}



\newcommand{\subs}{\subset}

\newcommand{\mbb}{\mathbb}

\newcommand{\mcl}{\mathcal}

\newcommand{\us}{\underset}
\newcommand{\os}{\overset}

\newcommand{\lra}{\longrightarrow}

\newcommand{\Ra}{\Rightarrow}

\newcommand{\equ}[1]{%
\begin{equation*}
#1
\end{equation*}
}
\newcommand{\equa}[1]{%
\begin{equation*}
\begin{aligned}
#1
\end{aligned}
\end{equation*}
}



\newtheorem{theorem}{Theorem}[section]

\newtheorem{lemma}[theorem]{Lemma}

\newtheorem{example}[theorem]{Example}
\newtheorem{note}[theorem]{Note}

\theoremstyle{definition}
\newtheorem{defn}[theorem]{Definition}
\theoremstyle{remark}
\newtheorem{remark}[theorem]{Remark}
\numberwithin{equation}{section}
\makeatletter
\def\namedlabel#1#2{\begingroup
   \def\@currentlabel{#2}%
   \label{#1}\endgroup
}
\makeatother


\begin{document}
\thispagestyle{empty}

\allowdisplaybreaks

\title[A Representation Theorem]
{A Representation Theorem for Generic Line Arrangements with Global Cyclicity in the Plane}
\author[C.P. Anil Kumar]{Author: C.P. Anil Kumar*}
\thanks{*The author is supported by a research grant and facilities provided by Center for study of Science, Technology and Policy (CSTEP), Bengaluru, INDIA for this research work.}
\subjclass[2010]{Primary: 51A20, Secondary: 52C35}
\keywords{Ordered Fields, Line Arrangements in the Plane, Combinatorial Cycle Invariants, Global Cyclicity}
\begin{abstract}
In this article, we prove a representation theorem that any generic line arrangement
in the plane over an ordered field which has global cyclicity can be represented isomorphically by a line arrangement with a given set of distinct slopes of the same cardinality.
\end{abstract}
\maketitle

\section{\bf{Introduction}}
The line arrangements (refer to Definition~\ref{defn:linearrangement}) have been studied by authors like B.~Gr{\"u}nbaum~\cite{MR0307027},~\cite{MR1976856}, J.~E.~Goodman and R.~Pollack~\cite{MR0583961}, R.~P.~Stanley~\cite{MR2383131} in various contexts over fields $\mbb{Q},\mbb{R},\mbb{C}$ and finite fields $\mbb{F}_q,q$ a prime power.
This topic has applications in areas such as Combinatorics, Braids and Configurations Spaces, Computer Science and Physics. In the context of arrangements, combinatorial abstractions are studied for vector configurations and hyperplane arrangements.
Here in this article we consider generic line arrangements over ordered fields (refer to Definition~\ref{defn:OrderedField}) and prove a representation theorem by associating combinatorial invariants. This result seems to be new. In this context, the method of
associating cycle invariants as a combinatorial model to point arrangements in
the plane has already been explored by authors J.~E.~Goodman and R.~Pollack~\cite{MR0583961}. A similar method is explained in chapter $10$ of the book~\cite{MR3823190}. In the proof of the main result, here, we associate cycle invariants as a combinatorial model to generic line arrangements in the plane which have global cyclicity (refer to Definition~\ref{defn:GlobalCyclicity}).

\subsection{Definitions and the main result}
Now we mention a few definitions before we get to the statement of the main theorem.
\begin{defn}[Definition of Ordered Field]
\label{defn:OrderedField}
~\\
Let $(\mbb{F},\leq)$ be a totally ordered field. Consider the following two properties.
\begin{enumerate}
\item P1: If $x,y,z\in \mbb{F}$ then $x\leq y\Ra x+z\leq y+z$.
\item P2: If $x,y\in \mbb{F}$ then $x\geq 0,y\geq 0\Ra xy \geq 0$.
\end{enumerate}
In this article, an ordered field $\mbb{F}$ means a totally ordered field $(\mbb{F},\leq)$ which satisfies the two properties P1,P2.
For example any subfield of $\mbb{R}$ is an ordered field with the induced ordering from the field of reals.
\end{defn}
\begin{remark}
Chapter XI in S.~Lang~\cite{MR1878556}, Chapters $5,11$ in N.~Jacobson~\cite{MR0780184},~\cite{MR1009787} respectively gives more interesting properties of such ordered fields.
\end{remark}

\begin{defn}[Lines in Generic Position in the Plane $\mbb{F}^2$ or Generic Line Arrangement]
\label{defn:linearrangement}
Let $\mbb{F}$ be an ordered field. Let $n$ be a positive integer.
We say a finite set $\mcl{L}^{\mbb{F}}_n=\{L_1,L_2,\ldots,L_n\}$ of lines in $\mbb{F}^2$ is in a generic position
or is a line arrangement if the following two conditions hold.
\begin{enumerate}
\item No two lines are parallel.
\item No three lines are concurrent.
\end{enumerate}
In this case we say that $\mcl{L}^{\mbb{F}}_n$ is a line arrangement. We denote the line arrangement
by $\mcl{L}_n$ if the field $\mbb{F}=\mbb{R}$. We say $n$ is the cardinality of the line arrangement.
\end{defn}
\begin{remark}
Henceforth in this article, a line arrangement always means a generic line arrangement.
\end{remark}

Here we give the definition of an isomorphism between two line arrangements.
\begin{defn}
\label{defn:Iso}
~\\
Let $\mbb{F}$ be an ordered field. Let $n,m$ be positive integers. Let \equ{(\mcl{L}_n^{\mbb{F}})_1=\{L_1,L_2,\ldots,L_n\},(\mcl{L}_m^{\mbb{F}})_2=\{M_1,M_2,\ldots,M_m\}}
be two line arrangements in the plane $\mbb{F}^2$ of cardinalities $n,m$ respectively.
We say a map $\gf:(\mcl{L}_n^{\mbb{F}})_1 \lra (\mcl{L}_m^{\mbb{F}})_2$ is an isomorphism between the line arrangements if
\begin{enumerate}
\item the map $\gf$ is a bijection, (that is, $n=m$) with $\gf(L_i)=M_{\gf(i)},1\leq i\leq n$ and
\item for any $1\leq i\leq n$ the order of intersection vertices on the lines $L_i,M_{\gf(i)}$ agree via the bijection induced by $\gf$ on its subscripts.
There are four possibilities of pairs of orders and any one pairing of orders out of the four pairs must agree via the bijection induced by $\gf$ on its subscripts.
\end{enumerate}
Two mutually opposite orders of points arise on any line in the plane because the field $\mbb{F}$ is ordered.
\end{defn}

We define a line arrangement in the plane which has global cyclicity as follows.
\begin{defn}[Existence of Global Cyclicity]
	\label{defn:GlobalCyclicity}
	~\\
	Let $\mbb{F}$ be an ordered field. Let $\mcl{L}_n^{\mbb{F}}=\{L_1,L_2,\ldots,L_n\}$ be a line arrangement. We say that there exists global cyclicity in the line arrangement $\mcl{L}_n^{\mbb{F}}$
	if all the lines form the sides of a convex polygon in some cyclic order of the lines.
\end{defn}

Main Theorem~\ref{theorem:repclass} is regarding a representation of a line
arrangement which has global cyclicity, isomorphically (refer to Definition~\ref{defn:Iso}),
by some set of lines forming a line arrangement with a given set of distinct slopes,
of same cardinality, which is useful to pick an element in the same isomorphism class by fixing
a finite set of slopes. The representation theorem is proved after the proof of
Theorem~\ref{theorem:TransParallel}. Now we state the theorem here.

\begin{theorem}[Representation Theorem]
\label{theorem:repclass}
~\\
Let $\mbb{F}$ be an ordered field. Let \equ{\{m_1,m_2,\ldots,m_n\}\subs \mbb{F}\cup \{\infty\}} be a set of $n$ distinct slopes. Then in any isomorphism class 
of a line arrangement $\mcl{L}_n^{\mbb{F}}$ of cardinality $n$ which has global cyclicity, there exists a set of $n$ lines which represents exactly this slope set.
\end{theorem}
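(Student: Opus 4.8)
The plan is to realize the isomorphism class of a given line arrangement $\mcl{L}_n^{\mbb{F}}$ by a sequence of Elementary Collineation Transformations (ECT), chosen so that after finitely many steps the slope multiset of the lines is exactly the prescribed set $\{m_1,\dots,m_n\}$. First I would recall that an isomorphism class of line arrangements, in the sense of Definition~\ref{defn:Iso}, is closed under the collineation group acting on $\mbb{F}^2$ (equivalently on $\mathbb{P}^2_{\mbb{F}}$), and that the ECT's generate a subgroup large enough to move slopes freely while preserving the combinatorial (incidence) data that defines the class. So the task reduces to a slope-adjustment problem: given the $n$ current slopes $s_1,\dots,s_n \in \mbb{F}\cup\{\infty\}$ of the lines $L_1,\dots,L_n$, produce a collineation (built from ECT's) carrying the ordered tuple $(s_1,\dots,s_n)$ to the ordered tuple $(m_1,\dots,m_n)$ after a suitable relabeling, without creating or destroying any intersection incidences.

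The key steps, in order, are as follows. \textbf{Step 1: Reduce to a one-line-at-a-time argument.} I would induct on the number of lines whose slope has not yet been placed in correct position. At each stage pick a line $L_j$ whose current slope $s_j$ differs from its target $m_{\sigma(j)}$; the goal is to apply an ECT that changes $s_j$ to $m_{\sigma(j)}$ while fixing the slopes already corrected. \textbf{Step 2: Construct the slope-moving ECT.} Working in an affine chart where $L_j$ is not vertical, a shear (a unipotent elementary transformation of the form $(x,y)\mapsto(x, y+tx)$ or $(x,y)\mapsto(x+ty,y)$) changes the slope of every line by a Möbius transformation of the slope parameter; since $\mbb{F}$ is infinite (it carries a $1$-$ad$ structure, hence is an ordered-type field), the parameter $t$ can be chosen to send $s_j$ to the desired value. \textbf{Step 3: Protect the already-fixed slopes and all incidences.} This is where the $1$-$ad$ structure and the ECT machinery of the paper do the real work: I would use the fact, available from the ECT framework preceding Lemma~\ref{lemma:ECTAppLemma}, that one can compose such a shear with further ECT's supported away from the finitely many lines already corrected, or choose the shear generically so that the finitely many bad values of $t$ (those causing a slope collision, or a forced new concurrence, or destroying an existing intersection pattern) are avoided — possible precisely because $\mbb{F}$ is infinite and the obstruction set is finite. \textbf{Step 4: Termination and distinctness.} After $n$ such steps every line carries its prescribed slope; since the $m_i$ are pairwise distinct, no two lines are parallel in the final arrangement, and the whole process has stayed inside the original isomorphism class because each ECT is a collineation. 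Finally one invokes Lemma~\ref{lemma:ECTAppLemma} to package the composite transformation as a legitimate representative-producing move.

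The main obstacle I anticipate is \textbf{Step 3}: guaranteeing that adjusting one slope does not silently alter the incidence type of the arrangement — for instance by making three previously non-concurrent lines pass through a common point, or by pushing an intersection point "across infinity" and changing a local cycle at infinity. Handling this cleanly requires that the slope-adjusting ECT be accompanied (or preceded) by a translation/shear normalization that keeps all $\binom{n}{2}$ pairwise intersection points in controlled position, and one must check that the set of parameter values $t$ causing any of the finitely many forbidden degeneracies is a proper (finite, or at worst non-cofinite) subset of $\mbb{F}$ — again leaning on infinitude of $\mbb{F}$. A secondary, more bookkeeping-level difficulty is choosing the relabeling permutation $\sigma$ consistently across the induction so that "slopes already corrected" is a monotone growing set; this is routine once one fixes, say, the lexicographic order of correction, but it must be stated. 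Everything else — the explicit Möbius action of a shear on slopes, the composition of ECT's, the preservation of the isomorphism class — is direct from the definitions and the ECT lemmas established earlier in the paper.
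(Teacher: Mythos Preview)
Your proposal rests on a misreading of what an ECT is in this paper. In Definition~\ref{defn:ECT} an ECT is \emph{not} a collineation of $\mbb{F}^2$: it replaces a single line $(L_k)_1:ax+by=c_1$ by the parallel line $(L_k)_2:ax+by=c_2$, pushed across exactly one intersection vertex. Thus an ECT \emph{preserves the slope set} and \emph{changes the isomorphism class} (the gonality vector shifts by $\pm 1$'s, as recorded in the lemma following Definition~\ref{defn:ECT}). Your plan assumes exactly the opposite --- that ECTs fix the class and move slopes --- so Steps~1--4 do not interact with the paper's machinery at all, and the appeal to Lemma~\ref{lemma:ECTAppLemma} at the end is vacuous.

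Independently of terminology, the shear strategy in Step~2 cannot be made ``one line at a time'' as you describe. A shear $(x,y)\mapsto(x,y+tx)$ acts on \emph{every} slope simultaneously by the same M\"obius transformation of the slope parameter; there is no nontrivial affine (or projective) map that changes the slope of $L_j$ while fixing the slopes of $L_1,\dots,L_{j-1}$. Indeed $\mathrm{PGL}_2(\mbb{F})$ acts only $3$-transitively on $\mbb{PF}^1_{\mbb{F}}$, so for $n\geq 4$ an arbitrary $n$-tuple of slopes cannot be carried to another arbitrary $n$-tuple by any global collineation --- the cross-ratio (and further invariants) obstruct it. Your Step~3 tries to localize the shear, but affine maps are global and nothing in the ECT framework provides a ``supported away'' version.

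The paper's proof runs the logic in the reverse direction. One first builds \emph{some} arrangement $A_0$ with the prescribed slope set $\{m_1,\dots,m_n\}$, placed so that it has a global $n$-gonality. Then one shows the graph on isomorphism classes with ECT-edges (Note~\ref{note:Graph}) is connected: every class reaches a global-gonality class by ECTs (push lines outward until an $n$-gon appears), and Theorem~\ref{theorem:TransParallel} together with Lemma~\ref{lemma:ECTAppLemma} connects all global-gonality classes to each other by parallel translations, which decompose into ECTs and isomorphisms. Since every ECT keeps each line parallel to itself, the slope set is invariant along the entire path, and the endpoint in the target class $\mcl{C}$ still has slopes $\{m_1,\dots,m_n\}$. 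The essential idea you are missing is precisely this: fix the slopes from the start and navigate through isomorphism classes, rather than fixing the class and trying to navigate through slope sets.
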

\subsection{Some more definitions}
~\\
We introduce the next two definitions regarding any polygon made up of lines.
The usefulness of the first definition lies in identifying convex polygons in a line arrangement.
\begin{defn}[$2\operatorname{-}$standard Consecutive Structure on Slopes]
Let $\mbb{F}$ be an ordered field. We say a certain $n\operatorname{-}$tuple of slopes
\equ{(m_1,m_2,\ldots,m_n) \in (\mbb{F}\cup \{\infty\})^{n}=(\mbb{PF}^1_{\mbb{F}})^n} has a $2\operatorname{-}$standard consecutive
structure if the following occurs.
\begin{enumerate}
\item $0\leq m_1<m_2<\ldots <m_i \leq \infty$
\item $m_{i+1}<m_{i+2}<\ldots < m_j \leq 0$
\item $0<m_{j+1}<m_{j+2}<\ldots < m_k \leq \infty$
\item $m_{k+1}<m_{k+2}<\ldots < m_n<0$
\end{enumerate}
\end{defn}
for some $1\leq i<j<k\leq n$ if they exist. The second sequence of slopes may be empty, that is, $j+1=i+1$ with $0<m_{j+1}$. The third sequence of slopes may be empty with $j+1=k+1$ and $m_{k+1}<m_{j}\leq 0$.
Or the last sequence of slopes may be empty, that is, $k=n$.
If the slopes $m_i:1\leq i \leq n$ arise from a line arrangement then the slopes are distinct as any two
distinct lines meet. The structure is considered as $2\operatorname{-}$standard by referring to usual angles
instead of slopes. Refer to Figure~\ref{fig:One}.
\begin{remark}
If the slopes $m_i$ of the lines $L_i$ arise from a line arrangement $\mcl{L}_n^{\mbb{F}}=\{L_1,L_2,\ldots,L_n\}$ with global cyclicity given by the anti-clockwise order
\equ{L_1\lra L_2\lra \ldots \lra L_n\lra L_1}
with $m_1$ being the least non-negative slope then we have $m_j<0\leq m_1<m_{j+1}$ whenever the second sequence of slopes exists. Also between the last slopes in the first and third sequences,
at most one of $m_i$ and $m_k$ can be infinity as the slopes are all distinct. We remark here that we can have line arrangements where all the slopes $m_i$ are negative for $1\leq i\leq n$ in which case the first and third sequences of slopes are empty.
\end{remark}
\begin{figure}[h]
\centering
\includegraphics[width = 1.05\textwidth]{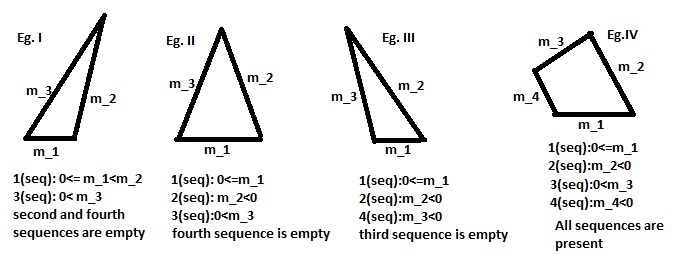}
\caption{Some Examples of $2\operatorname{-}$standard Consecutive Structure on Slopes with $m_1=0$}
\label{fig:One}
\end{figure}
Now we introduce the second definition. This definition is useful in classifying the cycles at infinity of a line arrangement
\equ{\mcl{L}_n^{\mbb{F}}=\{L_1,L_2,\ldots,L_n\}}
which has global cyclicity in this anti-clockwise cyclic order
\equ{L_1\lra L_2 \lra \ldots \lra L_n \lra L_1.}
\begin{defn}[Opposite Vertex of a Side in a Convex Polygon]
~\\
Let $\mbb{F}$ be an ordered field.
Let $\mcl{L}_n^{\mbb{F}}=\{L_1,L_2,\ldots,L_n\}$ be a line arrangement. Let
\equ{\{L_{1}\lra L_{2}\lra \ldots \lra L_{n}\lra L_{1}\}}
be a convex polygon with $n\operatorname{-}$sides in this anticlockwise
cyclic order. We assume that $L_{1}$ has the least non-negative slope $m_1$.
Suppose the $2\operatorname{-}$standard consecutive structure
for this polygon is given by
\begin{enumerate}
\item $0\leq m_1<m_2<\ldots <m_i \leq \infty$
\item $m_{i+1}<m_{i+2}<\ldots < m_j < 0$
\item $0<m_{j+1}<m_{j+2}<\ldots < m_k \leq \infty$
\item $m_{k+1}<m_{k+2}<\ldots < m_n<0$
\end{enumerate}
for some $1\leq i<j<k\leq n$, if they exist. Here $m_l$ is the slope of the line $L_{l}$. Then the opposite vertex
is the intersection of the pair of lines corresponding to the two extreme vertices on the line $L_1$ of the line arrangement.
If the second and the third sequences of the slopes are non-empty then the opposite vertex is $L_{j}\cap L_{j+1}$ and
we just note that \equ{m_j< 0\leq m_1< m_{j+1}.}
The definition of the opposite vertex for the remaining sides of the
polygon is similar using extreme vertices on the respective lines.
\end{defn}
We define a line at infinity for a line arrangement. This is useful to define the cycle at infinity for a line arrangement
which has global cyclicity.
\begin{defn}[Line at Infinity]
~\\
Let $\mbb{F}$ be an ordered field. Let $\mcl{L}_n^{\mbb{F}}=\{L_1,L_2,\ldots,L_n\}$. We say a line $L$ is a line at infinity
if $\mcl{L}_n^{\mbb{F}} \cup \{L\}$ is a line arrangement and all the vertices, that is, zero dimensional intersections of the arrangement $\mcl{L}_n^{\mbb{F}}$ lie
on one side of $L$ (possibly including $L$).
\end{defn}

\begin{note}
All lines not passing through the origin pass through two or three quadrants. They are oriented in the direction of the
quadrants accordingly as
\begin{itemize}
\item $I\lra II,II\lra III,III\lra IV, IV\lra I$ if the line passes through only two quadrants.
\item $I\lra II\lra III,II\lra III\lra IV,III\lra IV\lra I, IV \lra I \lra II$ if the line passes through the exactly three quadrants.
\end{itemize}
So the origin is always on the left side of the oriented line.
\end{note}

Define the cycle at infinity as follows.
\begin{defn}[Cycle at infinity: An Element of the Symmetric Group]
\label{defn:CAI}
~\\
Let $\mbb{F}$ be an ordered field. Let $\mcl{L}_n^{\mbb{F}}=\{L_1,L_2,\ldots, L_n\}$ be a line arrangement with global cyclicity
in this anti-clockwise cyclic order
\equ{L_1\lra L_2 \lra \ldots \lra L_n \lra L_1.}
Let $L$ be any line at infinity and not passing through origin which is oriented
and right side (the non-origin side) of the discrete half sides of $L$ is empty, that is,
does not contain any vertices of the line arrangement $\mcl{L}_n^{\mbb{F}}$. Then the cycle at
infinity is defined as the sequence of subscripts of the lines, a permutation
\equ{(i_1i_2\cdots i_n)\in S_n}
corresponding to the intersections of the lines in $\mcl{L}_n^{\mbb{F}}$ with $L$ in the direction
of the orientation of $L$.
\end{defn}

\section{\bf{On the $i\operatorname{-}$standard consecutive structure of an $n\operatorname{-}$cycle}}
Here in this section, we define $i\operatorname{-}$standard consecutive structure on an $n\operatorname{-}$cycle and prove Theorem~\ref{theorem:iStandardConsecutiveCycle} that
an $n\operatorname{-}$cycle has an unique $i\operatorname{-}$standard consecutive structure for some $1\leq i\leq n-1$ for $n>1$.

Now we introduce a structure on a permutation as follows.
\begin{defn}
We say an $n\operatorname{-}$cycle $(a_1=1,a_2,\cdots,a_n)$ is an $i\operatorname{-}$standard cycle if there exists a way to write
the integers $a_i:i=1,\cdots,n$ as $i$ sequences of inequalities as follows:
\equa{a_{11}<a_{12}&<\cdots<a_{1j_1}\\
a_{21}<a_{22}&<\cdots<a_{2j_2}\\
a_{31}<a_{32}&<\cdots<a_{3j_3}\\
&\vdots \\
a_{i1}<a_{i2}&<\cdots<a_{ij_i}}
where $\{a_{st}\mid 1\leq s\leq i,1\leq t\leq j_s\}=\{a_1,a_2,\cdots,a_n\}=\{1,2,\cdots,n\},
j_1+j_2+\cdots+j_i=n$ and $i$ is minimal, that is, there exists no smaller integer with such property
and further more that $a_{s(t+1)}$ occurs to the right of $a_{st}$ for every $1\leq s\leq i$ and
$1\leq t \leq j_s-1$ in this cycle arrangement $(a_1=1,a_2,\cdots,a_n)$.
\end{defn}
Define $i\operatorname{-}$standard consecutive structure on an $n\operatorname{-}$cycle as follows.
\begin{defn}
\label{defn:2StandardConsecutiveStructure}
We say an $n\operatorname{-}$ cycle $(a_1=1,a_2,\cdots,a_n)$ is a consecutive $i\operatorname{-}$standard cycle
or a $i\operatorname{-}$standard consecutive cycle if we have
\equ{a_{s1}<a_{s2}<\cdots<a_{sj_s}}
and in addition $a_{st}=a_{s1}+(t-1),1\leq t\leq j_s,1\leq s\leq i$
where $\{a_{st}\mid 1\leq s\leq i,1\leq t\leq j_s\}=\{a_1,a_2,\cdots,a_n\}=\{1,2,\cdots,n\},
j_1+\cdots+j_s=n$ and $a_{s(t+1)}$ occurs to the right of $a_{st}$ for every $s=1,\cdots,i$ and
$1\leq t \leq j_s-1$ in this cycle arrangement $(a_1=1,a_2,\cdots,a_n)$
and $i$ is minimal, that is, there exists no smaller integer with such property.
If the minimal value of $i$ is two then we say that the cycle has $2\operatorname{-}$standard consecutive structure.
\end{defn}
\begin{example}
For example if we consider the $5\operatorname{-}$cycle $(1,4,5,2,3)$ it is a $2\operatorname{-}$standard consecutive cycle.
However it has the following two $2\operatorname{-}$standard structures.
\begin{itemize}
\item $1<4<5,2<3$ (not consecutive).
\item $1<2<3,4<5$ (consecutive).
\end{itemize}
\end{example}
Now we prove Theorem~\ref{theorem:iStandardConsecutiveCycle} on the existence and uniqueness of the $i\operatorname{-}$standard consecutive structure on an $n\operatorname{-}$cycle for $n>1$.
\begin{theorem}[Existence and Uniqueness of the Consecutive $i\operatorname{-}$Standard Structure on an $n\operatorname{-}$cycle]
\label{theorem:iStandardConsecutiveCycle}
~\\
For $n>1$, there exists $i\operatorname{-}$standard consecutive structure for some $1\leq i \leq n-1$ on an $n\operatorname{-}$cycle and is uniquely determined.
\end{theorem}
\begin{proof}
We prove this by induction on $i,n$ as follows. If $i=n=1$ then there is nothing to prove.
The position of the element $n$ is uniquely determined as it
should appear in one of them at the end and $(n-1)$ appears before $n$ if $(n-1)$
appears before $n$ in the $n\operatorname{-}$cycle and appears as a single element of standardness
if $(n-1)$ appears after $n$. Now we remove $n$ from the cycle. The remaining cycle
is either $i\operatorname{-}$standard on $(n-1)\operatorname{-}$elements or $(i-1)$ standard on $(n-1)\operatorname{-}$elements.
This proves the theorem.

We can actually build this structure in an unique way for the
given $n\operatorname{-}$cycle as follows. Write $1$ first. Then write $1<2$ as it appears later. Then write $3$ next to $2$ if it appears
after $2$ or write as a single element of standardness if it appears before $2$ and so on.
\end{proof}
\section{\bf{On the cycle at infinity and the $2\operatorname{-}$standard consecutive structure}}
Here in the section we prove in Theorem~\ref{theorem:CIOVGG} that the cycle at infinity determines the line arrangement which contain global cyclicity up to a cyclic
renumbering of the subscripts of the lines.

Now we prove Theorem~\ref{theorem:A} about cycles at infinity arising out line arrangements with global cyclicity.
\begin{theorem}
\label{theorem:A}
Let $\mbb{F}$ be an ordered field. Let $\mcl{L}_n^{\mbb{F}}=\{L_1,L_2,\cdots,L_n\}$ be a line arrangement which gives rise to global cyclicity
in this anticlockwise manner
\equ{L_1\lra L_2 \lra \cdots \lra L_n\lra L_1}
then the cycle at infinity has a $2\operatorname{-}$standard consecutive structure.
\end{theorem}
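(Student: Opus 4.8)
The plan is to translate the gonality hypothesis into a statement about slopes and then read the cycle at infinity off the slopes. Since $L_1\lra L_2\lra\cdots\lra L_n\lra L_1$ is an $n$-gonality, the $L_i$ are the edge-lines of a bounded, hence convex, polygonal region, met in anticlockwise boundary order; as no two are parallel and no three are concurrent this is a genuine convex $n$-gon, so the edge directions turn strictly monotonically all the way around. After cyclically relabelling the $L_i$ so that $L_1$ carries the least non-negative slope (this only cyclically shifts the labels of the cycle at infinity, and hence does not affect whether it has a $2$-standard consecutive structure), the slopes $(m_1,\ldots,m_n)$ in boundary order form a $2$-standard consecutive structure: there are $1\le i\le j\le k\le n$ with $0\le m_1<\cdots<m_i\le\infty$, $m_{i+1}<\cdots<m_j\le 0$, $0<m_{j+1}<\cdots<m_k\le\infty$ and $m_{k+1}<\cdots<m_n<0$; that is, there are four maximal increasing slope-runs on the consecutive label-blocks $R_1=\{1,\ldots,i\}$, $R_2=\{i+1,\ldots,j\}$, $R_3=\{j+1,\ldots,k\}$, $R_4=\{k+1,\ldots,n\}$ (some possibly empty). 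This is exactly the structure already used in Theorem~\ref{theorem:JordanRegion} and in the definition of the opposite vertex of a side.

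The second step is to identify the cycle at infinity with the cyclic slope order. For the generic far line $L$ of Definition~\ref{defn:CAI}, parametrising $L$ linearly and letting it recede, the coordinate of $L_t\cap L$ along $L$ is to leading order a strictly monotone function of the angle between $L$ and $L_t$; hence reading the intersection points in the direction of the orientation of $L$ lists the lines $L_t$ precisely in the cyclic order of their slopes on $\mbb{PF}^1_{\mbb{F}}$ (the order $+0\lra\infty\lra-\infty\lra-0$), the slope of $L$, which by genericity equals none of the $m_t$, merely fixing where one starts reading, which is immaterial for an $n$-cycle. Writing the result in the form $(1,a_2,\ldots,a_n)$, the cycle at infinity therefore lists first $L_1$, then the remaining lines in increasing slope order starting just above $m_1$ and passing through $\infty$, then the lines of negative slope in increasing order, and back to $L_1$. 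In particular the lines of $R_1\cup R_3$ (those of non-negative slope, including a vertical line if present) precede those of $R_2\cup R_4$ (those of negative slope), and within each block $R_s$ increasing slope is the same as increasing label.

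For the bookkeeping, recall that a shuffle (merge) preserves the order of each block being merged. Hence in the cycle at infinity the labels $1,2,\ldots,i$ appear in this order (inside the non-negative part) and are then followed by $i+1,\ldots,j$ in this order (inside the negative part), so $\{1,\ldots,j\}$ occurs as one increasing run of consecutive integers; likewise $j+1,\ldots,k$ appear in order (non-negative part) and are then followed by $k+1,\ldots,n$ in order (negative part), so $\{j+1,\ldots,n\}$ occurs as one increasing run of consecutive integers. Since $\{1,\ldots,j\}$ and $\{j+1,\ldots,n\}$ partition $\{1,\ldots,n\}$, the cycle at infinity decomposes into at most two increasing runs of consecutive integers, so it is a consecutive $r$-standard cycle with $r\le 2$ in the sense of Definition~\ref{defn:2StandardConsecutiveStructure}, i.e. it has a $2$-standard consecutive structure, as claimed. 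The degenerate cases are one line each: a vertical edge sits at the $\infty$ to $-\infty$ junction and belongs to whichever of $R_1,R_3$ it terminates; an empty run $R_s$ simply deletes its block; and the choice of $L_1$ as a line of least non-negative slope is exactly what guarantees that no line of non-negative slope precedes $L_1$ in the slope order, so that nothing wraps around the cut at the label $1$.

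I expect the only real obstacle to be the second step: checking carefully, against the orientation conventions of Definition~\ref{defn:PlaneOrientation} and the normalisation that the non-origin side of $L$ be empty, that for $L$ far enough out the order of crossings along $L$ is genuinely the cyclic slope order, and in particular that it is independent, as a cyclic sequence, of the admissible choice of $L$. Granting that, the remainder is the combinatorial shuffle argument above, the only further point of care being the harmless reduction to $L_1$ having the least non-negative slope.
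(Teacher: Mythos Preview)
Your argument is correct and is essentially the paper's own proof, carried out in much greater detail. The paper's proof is four lines: rotate so that $L_1$ has least non-negative slope, let $L_i\cap L_{i+1}$ be the opposite vertex of $L_1$, and declare the $2$-standard consecutive structure to be $1<2<\cdots<i$ together with $i+1<\cdots<n$; this is exactly your split at $j$ (the paper's $i$ is your $j$), and the ``opposite vertex'' is nothing but the vertex where the slope passes back through $m_1$, i.e.\ the point separating your $R_1\cup R_2$ from $R_3\cup R_4$. Your shuffle bookkeeping and your identification of the cycle at infinity with the cyclic slope order make explicit what the paper leaves implicit; the paper does not verify the latter either, taking it as an evident consequence of the definition of the cycle at infinity via a far generic line.
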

\begin{proof}
Assume by rotation that $L_1$ has the least non-negative slope.
Suppose \equ{L_i\cap L_{i+1}} is the opposite vertex for $L_1$.
Then on the $n\operatorname{-}$ cycle at infinity we have the following unique $2\operatorname{-}$standard consecutive structure.
\begin{itemize}
\item $1<2<\cdots<i$
\item $i+1<i+2<\cdots<n$
\end{itemize}
This proves the theorem.
\end{proof}
Now we mention a note about existence of rotations.
\begin{note}
To describe rotations of the plane $\mbb{F}^2$ we need square roots of elements of $\mbb{F}$ in the field itself.
For this purpose we note that for an ordered field $\mbb{F}$, we have $char(\mbb{F})=0$ with $\mbb{Q}\subs \mbb{F}$.
Let \equ{\mcl{TAN}=\{m\in \mbb{F}^{+}\mid 1+m^2=\square\}.}
Given a slope $m>0$, if $\mbb{Q}$ is dense, then there exists an $m_1\in \mcl{TAN}\cap \mbb{Q}$ such that $0\neq m_1<\frac{m}{2}$. So arbitrary small rotations exist. Otherwise we replace $\mbb{F}$ by its real closure in which case rotations corresponding to
all slopes $m\in \mbb{F}$ exist because $\sqrt{1+m^2}\in \mbb{F}$ for all $m\in \mbb{F}$. The proofs we present here are independent of the ordered field $\mbb{F}$.
\end{note}

Now define the combinatorial relevance of the $2\operatorname{-}$standard consecutive structure with respect to geometric $2\operatorname{-}$standard consecutive structure
on slopes of lines in a line arrangement as follows.
\begin{defn}[Slope Property]
\label{defn:SlopeProperty}
~\\
Let $\mbb{F}$ be an ordered field. Let $\mcl{L}_n^{\mbb{F}}=\{L_1,L_2,\cdots,L_n\}$ be a line arrangement which gives rise to global cyclicity
in this anticlockwise manner
\equ{L_1\lra L_2 \lra \cdots \lra L_n\lra L_1.}
We say that the $2\operatorname{-}$standard consecutive structure on a permutation $n\operatorname{-}$cycle associated to a line
arrangement respects the slope property if the following occurs.
If the $2\operatorname{-}$standard consecutive structure is given by
\begin{itemize}
\item $1<2<3<\cdots<j$.
\item $j+1<j+2<\cdots<n$.
\end{itemize}
then the intersection vertices on the line $L_1$ has the following order.
\equa{&L_{j+1}\cap L_1\lra L_{j+2} \cap L_1\lra \cdots \lra L_k \cap L_1 \lra L_{k+1} \cap L_1 \lra\cdots \lra\\
&L_n\cap L_1 \lra L_2\cap L_1 \lra \cdots \lra L_i\cap L_1 \lra L_{i+1}\cap L_1 \lra \cdots \lra L_j\cap L_1,}
that is, modulo a rotation of the plane $\mbb{F}^2$, (with $m_1$, the slope of $L_1$ as the least non-negative slope), we have
\begin{enumerate}
\item $0\leq m_1<m_2<\cdots <m_i \leq \infty$
\item $m_{i+1}<m_{i+2}<\cdots < m_j < 0$
\item $0<m_{j+1}<m_{j+2}<\cdots < m_k \leq \infty$
\item $m_{k+1}<m_{k+2}<\cdots < m_n<0$
\end{enumerate}
with $m_j<0\leq m_1<m_{j+1}$ if the second and third sequences of slopes are non-empty. Here $m_i$ denotes the slope of the line $L_i$.
\end{defn}
Now we prove the theorem of this section.
\begin{theorem}[Cycle at Infinity and the Opposite Vertices of sides of the Global Cyclicity]
\label{theorem:CIOVGG}
~\\
Let \equ{\mcl{L}_n^{\mbb{F}}=\{L_1\lra L_2\lra\cdots\lra L_n\lra L_1\}}
be a line arrangement in the plane giving rise to global cyclicity in this anticlockwise cyclic order.
Then the cycle at infinity having the $2\operatorname{-}$standard structure which respects the slope property determines
uniquely the opposite vertex for any side in the $n\operatorname{-}$gon. Conversely if we know the opposite vertex for any side
in this $n\operatorname{-}$gon then the cycle at infinity is determined uniquely and its $2\operatorname{-}$standard structure
respects the slope property (refer to Definition~\emph{\ref{defn:SlopeProperty}}). Moreover the line arrangement
$\mcl{L}_n^{\mbb{F}}$ is also determined up to an isomorphism, that is, the following. Let \equ{(\mcl{L}_n^j)^{\mbb{F}}=\{L^j_1\lra L^j_2\lra\cdots\lra L^j_n\lra L^j_1\},j=1,2} be two line arrangements in the plane giving rise to global cyclicity in this anticlockwise cyclic order with
$\gs^j,j=1,2$ the cycles at infinity respectively. Let $\gf:(\mcl{L}_n^1)^{\mbb{F}}\lra (\mcl{L}_n^2)^{\mbb{F}}$ be the bijection taking $L^1_i\lra L^2_i,1\leq i\leq n$. Then we have that the following are equivalent.
\begin{enumerate}
\item $\gf$ is an isomorphism.
\item $\gs^1=\gs^2$.
\end{enumerate}
\end{theorem}
\begin{proof}
Respecting the slope property is a given, once the cycle at infinity is determined.
The cycle at infinity determines uniquely the opposite vertex
for any side in the $n\operatorname{-}$gon as we can use the $2\operatorname{-}$standard structures which respects the slope property on each
of the conjugate cycles of the cycle at infinity obtained by cyclically changing the indices $1,2,\cdots,n$.

Conversely if we know the opposite vertex for any side we need
to determine the slope ordering of the lines $L_1,L_2,\cdots,L_n$.
This slope order gets determined because of the following reason.  If we know the opposite vertex
for a side on a line $L$ of the arrangement then the sequence of intersections gets determined including the
end-points using the opposite vertex on the line $L$. Hence the complete line arrangement gets determined
using Definition~\ref{defn:Iso}. Hence this determines the cycle at infinity as well.
\end{proof}
\begin{defn}[2-standard consecutive $n\operatorname{-}$cycles]
\label{defn:2StandardConsecutiveCycles}
~\\
Let $T_n\subs S_n$ be the set of $2\operatorname{-}$standard consecutive $n\operatorname{-}$cycles in $S_n$.
\end{defn}
Now we prove a lemma.
\begin{lemma}
We have $\#(T_n)=2^{n-1}-n$.
\end{lemma}
\begin{proof}
This follows by counting the cardinality of $T_n$.
If the $2\operatorname{-}$standard consecutive structure is given by
\begin{itemize}
\item $1<2<3<\cdots<j$.
\item $j+1<j+2<\cdots<n$.
\end{itemize}
then the number of such cycles is given by $\binom{n-1}{j-1}-1$.
Hence the total number is given by \equ{\us{i=2}{\os{n-1}{\sum}}\bigg(\binom{n-1}{i-1}-1\bigg)=\us{i=0}{\os{n-1}{\sum}}\bigg(\binom{n-1}{i}-1\bigg)=2^{n-1}-n.}
\end{proof}
\section{\bf{Representation theorem}}
\label{sec:RT}
In this section we prove the main Theorem~\ref{theorem:repclass}.
First we prove the following theorem on line arrangements with global cyclicity which differ by translations.
\begin{theorem}[Transitivity on the $n\operatorname{-}$cycles which have $2\operatorname{-}$standard consecutive structures
	by translations]
	\label{theorem:TransParallel}
	~\\
	Let $\mbb{F}$ be an ordered field.
	Let \equ{\mcl{L}_n^{\mbb{F}}=\{L_1\lra L_2\lra\cdots\lra L_n\lra L_1\}}
	be a line arrangement in the plane giving rise to global cyclicity in this anticlockwise cyclic order.
	Let $\gt$ be the cycle at infinity having a $2\operatorname{-}$standard consecutive structure which respects slope property.
	Let $\gs$ be another $n\operatorname{-}$cycle having a $2\operatorname{-}$standard consecutive structure. Assume that $L_1$ has the least
	non-negative slope. Then we can move the lines
	\equ{L_2,L_3,\cdots,L_n}
	by translations into another line arrangement which after a permutation of subscripts
	\equ{2,3,\cdots,n} has global cyclicity in the anticlockwise cyclic order
	\equ{L_1\lra L_2\lra\cdots\lra L_n\lra L_1}
	and has cycle at infinity $\gs$ having the $2\operatorname{-}$standard consecutive structure which
	respects slope property.
\end{theorem}
\begin{proof}
	Consider the $n\operatorname{-}$cycles $\gt$ and $\gs$ which have the $2\operatorname{-}$standard consecutive structures.
	We claim that there exist translations of the set of
	lines of the arrangement which give global cyclicity
	\equ{L_1\lra L_2\lra \cdots \lra L_n\lra L_1}
	in the anticlockwise order after a suitable permutation of subscripts and
	having any $n\operatorname{-}$cycle with a $2\operatorname{-}$standard consecutive
	structure which respects slope property.
	
	To observe this fact first we consider over the field of reals in which
	we consider arbitrary $n\operatorname{-}$distinct angles in $[0,\gp)$
	in the increasing order corresponding to $n\operatorname{-}$lines in the real plane (say)
	\equ{0=\gth_1<\gth_2<\cdots <\gth_n<\gp.}
	It does not matter what the exact angles are, however what matters is the order of the angles
	with respect to subscripts. Now the possibilities of the $n\operatorname{-}$gons are precisely all the possibilities
	which satisfy the following.
	\begin{itemize}
		\item $0 = \ga_1<\ga_2<\cdots <\ga_i<\gp$
		\item $0<\ga_{i+1}<\ga_{i+2}<\cdots<\ga_n<\gp$.
	\end{itemize}
	where $\{\ga_1=0,\ga_2,\cdots,\ga_n\}=\{\gth_1=0,\gth_2,\cdots,\gth_n\}$.
	The lines with slopes $\ga_i:i=1,2,\cdots,n$ gives an anticlockwise $n\operatorname{-}$gon
	\equ{L_1\lra L_2\lra \cdots \lra L_n\lra L_1}
	where $L_i$ makes an angle $\ga_i$ with respect to $X\operatorname{-}$axis.
	The permutation $\gl$ of the subscripts corresponding to $\gth_i=\ga_{\gl(i)}$ are precisely those $\gl$ such that the $n\operatorname{-}$cycle $(1=\gl(1),\gl(2),\cdots,\gl(n))$ has a
	$2\operatorname{-}$standard consecutive structure. This observation can be extended to any ordered field $\mbb{F}$ where we use slopes instead
	of angles. The actual values of slopes do not matter for any $n\operatorname{-}$distinct values in $\mbb{F}\cup \{\infty\}$.
	
	So we can translate the lines to obtain the required line arrangement with required global cyclicity.
	This proves the theorem.
\end{proof}

Now we prove the Representation Theorem~\ref{theorem:repclass}.
\begin{proof}
Theorem~\ref{theorem:TransParallel} proves that all line arrangements with global cylicity can be realised by any finite set of distinct slopes of the same cardinality. This proves Theorem~\ref{theorem:repclass}.
\end{proof}

\vspace{.2cm}
\noindent C.P. ANIL KUMAR,\\
Center for Study of Science, Technology and Policy,\\
\noindent	\# 18 \& \#19, 10th Cross, Mayura Street,\\
\noindent	Papanna Layout, Nagashettyhalli, RMV II Stage,\\
\noindent	Bengaluru - 560094, INDIA.\\
E-mail: {\it akcp1728@gmail.com}
\end{document}